\newcommand{\annotate}[1]{\ifthenelse{\boolean{annotate}}{\cbstart \textcolor{red}{#1} \cbend}{#1}}
\tikzset{>=stealth'}
\newtcbox{\cbox}[1]{
	boxrule=1pt,
	colback=#1!10!white,
	colframe=#1,
	left=0pt,right=0pt,top=0pt,bottom=0pt
}
\newcommand{\raisedcbox}[2]{
	\raisebox{-4pt}[10pt][8pt]{\cbox{#1}{#2}}
}
\newcommand{\snp}{\raisedcbox{red}{sNP}}
\newcommand{\np}{\raisedcbox{yellow}{NP}}
\newcommand{\p}{\raisedcbox{green}{P}}
\newcommand{\multiline}[2]{
	\begin{tabular}{@{}#1@{}}#2\end{tabular}
}
\newcommand{\Ain}[1]{A_{#1}^{\mathrm{in}}}
\newcommand{\Aout}[1]{A_{#1}^{\mathrm{out}}}
\theoremstyle{plain}
\newtheorem{theorem}{Theorem}
\newtheorem{lemma}{Lemma}
\theoremstyle{definition}
\newtheorem{remark}{Remark}
\newcommand{\reals}{\mathbb{R}}
\newcommand{\vect}[1]{\mbox{\boldmath$#1$}}
\DeclareMathOperator{\val}{val}
\DeclareMathOperator{\LP}{LP}
\title{A polynomially solvable case of the pooling problem}
\author
{
Natashia Boland \\
{\it Georgia Institute of Technology, Atlanta, U.S.A.} \and
Thomas Kalinowski \qquad Fabian Rigterink \\
{\it The University of Newcastle, Australia}
}
\date{\today}
\begin{document}

\onehalfspacing

\maketitle

\begin{abstract}
  \noindent Answering a question of Haugland, we show that the pooling problem with one pool and a
  bounded number of inputs can be solved in polynomial time by solving a polynomial number of linear
  programs of polynomial size. We also give an overview of known complexity results and remaining
  open problems to further characterize the border between (strongly) NP-hard and polynomially
  solvable cases of the pooling problem.

\paragraph{Keywords} Pooling problem $\cdot$ Computational complexity
\end{abstract}

\section{Introduction, motivation and problem definition}

The pooling problem is a nonconvex nonlinear programming problem with applications in the refining
and petrochemical industries \cite{deWitt89,Rigby95}, mining \cite{Boland15a,Boland15c},
agriculture, food manufacturing, and pulp and paper production \cite{Visweswaran09}. Informally, the
problem can be stated as follows: given a set of raw material suppliers (inputs) and qualities of
the material, find a cost-minimizing way of blending these raw materials in intermediate pools and
outputs so as to satisfy requirements on the final output qualities. The blending in pools and
outputs introduces bilinear constraints and makes the problem hard.

While the pooling problem has been known to be hard in practice ever since its proposal by Haverly
in 1978 \cite{Haverly78}, it was only formally proven to be strongly NP-hard by Alfaki and Haugland
in 2013 \cite{Alfaki13a}. Their proof of strong NP-hardness, however, considered a very general case
of the problem, with arbitrary parameters and an arbitrary network structure. Once the parameters
and the network structure are more specific, e.g., by bounding the number of vertices, their in- and
out-degrees, or the number of qualities, the complexity of the problem needs to be re-examined. This
way, several polynomially solvable cases of the pooling problem were proven
\cite{Alfaki13b,Haugland14,Haugland15}. However, the border between (strongly) NP-hard and
polynomially solvable cases of the pooling problem is still only partially characterized. This is
mainly due to the combinatorial explosion of parameter choices for the problem. In this paper, we
solve an open problem that has been pointed out in \cite{Haugland14,Haugland15}: the pooling problem
with one pool and a bounded number of inputs is in fact polynomially solvable.

\begin{table}
\caption{Notation for the pooling problem}
\label{tab:notation}
\setlength{\tabcolsep}{4pt}
\subfloat{%
\begin{tabu} to .4\linewidth {lX[l]}
\multicolumn{2}{l}{Sets} \\
\midrule
$V$ & Set of vertices \\
$I$ & Set of inputs \\
$L$ & Set of pools \\
$J$ & Set of outputs \\
$K$ & Set of qualities \\
$A$ & Set of arcs \\
$A_I$ & Set of input-to-pool arcs: \\
& \quad $A_I := A \cap (I \times L)$ \\
$A_J$ & Set of pool-to-output arcs: \\
& \quad $A_J := A \cap (L \times J)$ \\
$\Aout{v}$ & Set of outgoing arcs of $v \in I \cup L$ \\
$\Ain{v}$ & Set of incoming arcs of $v \in L \cup J$ \\
\end{tabu}
}
\hfill
\subfloat{%
\begin{tabu} to .56\linewidth {lX[l]}
\multicolumn{2}{l}{Parameters} \\
\midrule
$c_a$ & Cost of flow on arc $a \in A$ \\
$\lambda_{ik}$ & Quality value of input $i \in I$ for quality $k \in K$ \\
$\lambda_{ak}$ & $\lambda_{ak} \equiv \lambda_{ik}, \enspace a \in \Aout{i}, \enspace i \in I, \enspace k \in K$ \\
$\mu_{jk}$ & Upper bound on quality value of output $j \in J$ for quality $k \in K$ \\
$C_v$ & Upper bound on total flow through vertex $v \in V$ \\
$u_a$ & Upper bound on flow on arc $a \in A$ \\
& \\
\multicolumn{2}{l}{Variables} \\
\midrule
$x_a$ & Flow on arc $a \in A_I$ \\
$y_a$ & Flow on arc $a \in A_J$ \\
$p_{\ell k}$ & Quality value of pool $\ell \in L$ for quality $k \in K$ \\
$p_{ak}$ & $p_{ak} \equiv p_{\ell k}, \enspace a \in \Aout{\ell}, \enspace \ell \in L, \enspace k \in K$ \\
\end{tabu}
}
\end{table}

We consider a directed graph $G = (V, A)$ where $V$ is the set of vertices and $A$ is the set of
arcs. $V$ is partitioned into three subsets $I, L, J \subset V$: $I$ is the set of inputs, $L$ is
the set of pools and $J$ is the set of outputs. Flows are blended in pools and outputs. The pooling
problem literature addresses a variety of problem instances with $A \subseteq (I \times L) \cup (L
\times L) \cup (L \times J) \cup (I \times J)$. Instances with $A \cap (L \times L) = \emptyset$ are
referred to as \emph{standard pooling problems} (SPPs), and instances with $A \cap (L \times L) \neq
\emptyset$ are referred to as \emph{generalized pooling problems} (GPPs). Both SPPs and GPPs can be
modelled as bilinear programs, which are special cases of nonlinear programs. Instances with $L =
\emptyset$ are referred to as \emph{blending problems} and can be modelled as linear programs.

In this paper (as in \cite{Alfaki13b,Haugland14,Haugland15}), we study the complexity of SPPs where
$A \subseteq (I \times L) \cup (L \times J)$, i.e., all arcs are either input-to-pool or
pool-to-output arcs. For notational simplicity, we denote the set of the former by $A_I := A \cap (I
\times L)$ and the set of the latter by $A_J := A \cap (L \times J)$. We do not consider
input-to-output arcs since for every such arc $(i, j)$, we can add an auxiliary pool $\ell$ and
replace $(i, j)$ by an input-to-pool arc $(i, \ell)$ and a pool-to-output arc $(\ell,
j)$. Throughout this paper, we use the term \emph{pooling problem} to refer to a SPP without
input-to-output arcs. We consider a set of qualities $K$ whose quality values are tracked across the
network. We assume linear blending, i.e., the quality value of a pool or output for a quality is the
convex combination of the incoming quality values weighted by the incoming flows as a
fraction of the total incoming flow.

For inputs and pools $v \in I \cup L$, we denote the set of outgoing arcs of $v$ by $\Aout{v}$, and
for pools and outputs $v \in L \cup J$, we denote the set of incoming arcs of $v$ by $\Ain{v}$. Let
$x_a$ be the flow on input-to-pool arc $a \in A_I$, and let $y_a$ be the flow on pool-to-output arc
$a \in A_J$. The cost of flow on arc $a \in A$ (which may be negative) is given by $c_a$. The total
flow through vertex $v \in V$ (resp. the flow on arc $a \in A$) is bounded above by $C_v$
(resp. $u_a$). For every input $i \in I$ and quality $k \in K$, the quality value of the incoming
raw material is given by $\lambda_{ik}$. Let $p_{\ell k}$ denote the quality value of the blended
raw materials in pool $\ell \in L$ for quality $k \in K$. For every output $j \in J$ and quality $k
\in K$, the upper bound on the quality value of the outgoing blend is given by $\mu_{jk}$. In
addition to $\lambda_{ik}$ and $p_{\ell k}$, it is sometimes more convenient to have arc-based
rather than node based quality parameters and variables. Since the quality of flow on arc $(v, w)$
is equal to the blended quality of the total flow through vertex $v$, we have $\lambda_{ik} \equiv
\lambda_{ak}$ for all inputs $i \in I$, their outgoing arcs $a \in \Aout{i}$ and qualities $k \in
K$. Analogously, we have $p_{\ell k} \equiv p_{ak}$ for all pools $\ell \in L$, their outgoing arcs
$a \in \Aout{\ell}$ and qualities $k \in K$. Table~\ref{tab:notation} summarises the notation for
the pooling problem.

\begin{comment}
\begin{table}
\caption{Notation for the pooling problem}
\label{tab:notation}
\begin{tabu}{lX}
\multicolumn{2}{l}{Sets} \\
\midrule
$V$ & Set of vertices \\
$I$ & Set of inputs \\
$L$ & Set of pools \\
$J$ & Set of outputs \\
$K$ & Set of qualities \\
$A$ & Set of arcs \\
$A_I$ & Set of input-to-pool arcs: $A_I := A \cap (I \times L)$ \\
$A_J$ & Set of pool-to-output arcs: $A_J := A \cap (L \times J)$ \\
$\Ain{v}$ & Set of incoming arcs of $v \in L \cup J$ \\
$\Aout{v}$ & Set of outgoing arcs of $v \in I \cup L$ \\
& \\
\multicolumn{2}{l}{Parameters} \\
\midrule
$c_a$ & Cost of flow on arc $a \in A$ \\
$\lambda_{ik}$ & Quality value of input $i \in I$ for quality $k \in K$ \\
$\lambda_{ak}$ & $\lambda_{ak} \equiv \lambda_{ik}, \enspace a \in \Aout{i}, \enspace i \in I, \enspace k \in K$ \\
$\mu_{jk}$ & Upper bound on quality value of output $j \in J$ for quality $k \in K$ \\
$C_v$ & Upper bound on total flow through $v \in V$ \\
$u_a$ & Upper bound on flow $y_a, \, a \in A$ \\
& \\
\multicolumn{2}{l}{Variables} \\
\midrule
$x_a$ & Flow on arc $a \in A_I$ \\
$y_a$ & Flow on arc $a \in A_J$ \\
$p_{\ell k}$ & Quality value of pool $\ell \in L$ for quality $k \in K$ \\
$p_{ak}$ & $p_{ak} \equiv p_{\ell k}, \enspace a \in \Aout{\ell}, \enspace \ell \in L, \enspace k \in K$ \\
\end{tabu}
\end{table}
\end{comment}

We now present the classical formulation of the pooling problem, commonly referred to as the
P-formulation \cite{Haverly78}. There are numerous alternative formulations of the pooling problem,
including the Q-~\cite{BenTal94}, PQ-~\cite{Tawarmalani02} and HYB-formulations~\cite{Audet04}, and
most recently multi-commodity flow formulations~\cite{Alfaki13a,Alfaki13b,Boland15b}. All
formulations are equivalent in the sense that there is a one-to-one correspondence between a
feasible solution of one formulation and another, and they all have the same optimal objective
value. However, the alternative formulations often show a better computational performance than the
P-formulation, as studied e.g. in \cite{Boland15b}. A recent paper by Gupte et al. \cite{Gupte15}
gives an excellent overview of topics that have been studied in the context of the pooling
problem. Within the scope of this paper, however, we chose to prove complexity results using the
classical P-formulation.

In the P-formulation, a flow $(\vect{x}, \vect{y})$ satisfies the following constraints:
\begin{alignat}{2}
\sum \limits_{a \in \Ain{\ell}} x_a & = \sum \limits_{a \in \Aout{\ell}} y_a, \qquad & & \ell \in L, \label{eq:1} \\
\sum \limits_{a \in \Aout{i}} x_a & \leqslant C_i, & & i \in I, \label{eq:2} \\
\sum \limits_{a \in \Ain{\ell}} x_a & \leqslant C_{\ell}, & & \ell \in L, \label{eq:3} \\
\sum \limits_{a \in \Ain{j}} y_a & \leqslant C_j, & & j \in J, \label{eq:4} \\
x_a, y_a & \leqslant u_a, & & a \in A_I, \, A_J, \, \textnormal{resp.} \label{eq:5}
\end{alignat}
Constraint~\eqref{eq:1} is flow conservation which ensures that at every pool, the total incoming
flow equals the total outgoing flow. \eqref{eq:2}--\eqref{eq:4} are vertex capacity constraints and
\eqref{eq:5} is an arc capacity constraint. For notational simplicity, we denote the \emph{set of
  flows} by $\mathcal{F} := \{ (\vect{x}, \vect{y}) \in \mathbb{R}_{\geqslant 0}^{|A_I|} \times
\mathbb{R}_{\geqslant 0}^{|A_J|}: \, \textnormal{\eqref{eq:1}--\eqref{eq:5} are satisfied} \}$. The
P-formulation can now be stated as follows:
\begin{alignat}{3}
\underset{\vect{x}, \vect{y}, \vect{p}}{\min} \qquad & & \omit\rlap{$\displaystyle \sum \limits_{a \in A_I} c_a x_a + \sum \limits_{a \in A_J} c_a y_a$} \notag \\
\textnormal{s.t.} \qquad & & (\vect{x}, \vect{y}) & \in \mathcal{F}, & & \notag \\
& & \sum \limits_{a \in \Ain{\ell}} \lambda_{ak} x_a & = p_{\ell k} \sum \limits_{a \in \Aout{\ell}} y_a, \qquad & & \ell \in L, \enspace k \in K, \label{eq:6} \\
& & \sum \limits_{a \in \Ain{j}} p_{ak} y_a & \leqslant \mu_{jk} \sum \limits_{a \in \Ain{j}} y_a, & & j \in J, \enspace k \in K. \label{eq:7}
\end{alignat}
Equality \eqref{eq:6} is the pool blending constraint which ensures that the $p$ variables track the
quality values across the network. Inequality \eqref{eq:7} is the output blending constraint. We
take the requirements that $\lambda_{ak} \equiv \lambda_{ik}$ for all $a \in \Aout{i}$, $i \in I$ and $k \in K$, and that $p_{ak} \equiv p_{\ell k}$ for all $a \in \Aout{\ell}$, $\ell \in L$ and
$k \in K$, to be implicit in the model.

\section{Known complexity results}

Table~\ref{tab:complexityResultsTable} provides an overview of known complexity results, and
Figure~\ref{fig:complexityResultsFigure} shows most of these complexity results in a tree
structure. All of these results were formally proven in
  \cite{Alfaki13b,Dey15,Haugland14,Haugland15}. When bounding the number of vertices, the cases of
one input or output are polynomially solvable. Furthermore, the cases of one pool and a bounded
number of outputs or qualities are polynomially solvable. If we only have one pool (and no other
restrictions), then the problem remains strongly NP-hard. The same holds if we have only one
quality. The problem remains strongly NP-hard if we have one quality and two inputs or two
outputs. Only if we have one quality, two inputs and two outputs, then the problem becomes
NP-hard. The problem also remains strongly NP-hard if the out-degrees of inputs and pools
are bounded above by two, or if the in-degrees of pools and outputs are bounded above by
two. Finally, it was shown in \cite{Dey15} that there exists a polynomial time algorithm
  which guarantees an $n$-approximation (where $n$ is the number of output nodes). The authors of
  this paper also showed that if there exists a polynomial time approximation algorithm with
  guarantee better than $n^{1 - \varepsilon}$ for any $\varepsilon > 0$, then NP-complete problems
  have randomized polynomial time algorithms.

% Thorem 1: Let $n$ be the number of output nodes in a pooling problem. There exists a polynomial time algorithm for the pooling problem that guarantees an $n$-approximation. On the other hand, if there exists a polynomial time approximation algorithm with guarantee better than $n^{1 - \varepsilon}$ for any $\varepsilon > 0$ for the pooling problem, then NP-complete problems have randomized polynomial-time algorithms.

%\begin{landscape}
\begin{sidewaystable}[htb]
\centering
\caption{Overview of known complexity results}
\label{tab:complexityResultsTable}
\medskip
\small
\begin{tabular}{r|ccc|c|ccc|ccl}
\toprule
& \multicolumn{3}{c|}{bounded \#vertices} & & \multicolumn{3}{c|}{bounded in-\slash out-degrees} & & & \\
\# & $|I|$ & $|L|$ & $|J|$ & $|K|$ & $\forall \, i \in I$ & $\forall \, \ell \in L$ & $\forall \, j \in J$ & Complexity & Reduction & Reference(s) \\
\midrule
1 & 1 & & & & & & & \p & & trivial \\ % \cite{Haugland14b}, 40\slash 65
2 & & 1 & & & & & & \snp & MIS & \multiline{l}{\cite{Alfaki13b}, Corollary~1; \\ \cite{Haugland14}, Proposition~1; \\ \cite{Haugland15}, Theorems~1--2} \\ % \cite{Haugland14b}, 19\slash 65
3 & & & 1 & & & & & \p & & trivial \\ % \cite{Haugland14b}, 40\slash 65
4 & & & & 1 & & & & \snp & X3C & see \#8, \#9 and \#11 \\ % \cite{Haugland14b}, 24\slash 65
\midrule
5 & $[1, i_{\max}]$ & 1 & & & & & & \p & & this paper \\
6 & & 1 & $[1, j_{\max}]$ & & & & & \p & & \cite{Haugland14}, Proposition~2 \\
7 & & 1 & & $[1, k_{\max}]$ & & & & \p & & \multiline{l}{\cite{Alfaki13b}, Proposition~2; \\ \cite{Haugland14}, Proposition~3} \\
\midrule
8 & 2 & & & 1 & & & & \snp & X3C & \cite{Haugland15}, Theorem~4 \\
% 9 & & 2 & & 1 & & & & \np & BP2 & \cite{Haugland14}, Proposition~5 \\
9 & & & 2 & 1 & & & & \snp & X3C & \cite{Haugland15}, Theorem~5 \\
%10 & 2 & 2 & & 1 & & & & \np & BP2 & see \#11 \\ % \cite{Haugland14b}, 38\slash 65
10 & 2 & & 2 & 1 & & & & \np & BP2 & \multiline{l}{\cite{Haugland14}, Proposition~5; \\ \cite{Haugland15}, Theorem~3} \\ % \cite{Haugland14b}, 38\slash 65
%12 & & 2 & 2 & 1 & & & & \np & BP2 & see \#11 \\ % \cite{Haugland14b}, 38\slash 65
11 & \multicolumn{3}{c|}{$\min \{ |I|, |J| \} = 2$} & 1 & \multicolumn{3}{c|}{$\max \{ |\Ain{\ell}|, |\Aout{\ell}| \} \leqslant 6$} & \snp & X3C & \cite{Haugland15}, Corollary~1 \\
\midrule
12 & & & & & $|\Aout{i}| \leqslant 2$ & $|\Aout{\ell}| \leqslant 2$ & & \snp & MAX 2-SAT & \multiline{l}{\cite{Haugland14}, Proposition~7; \\ \cite{Haugland15}, Theorem~6} \\
13 & & & & & & $|\Ain{\ell}| \leqslant 2$ & $|\Ain{j}| \leqslant 2$ & \snp & MIN 2-SAT & \multiline{l}{\cite{Haugland14}, Proposition~6; \\ \cite{Haugland15}, Theorem~7} \\
14 & & & & & \multicolumn{3}{c|}{$\min \{ |\Ain{\ell}|, |\Aout{\ell}| \} = 1$} & \p & & \multiline{l}{\cite{Dey15}, Corollary~1; \\ \cite{Haugland14}, Proposition~4; \\ \cite{Haugland15}, Proposition~3} \\ % \cite{Haugland14b}, 41\slash 65
\bottomrule
\addlinespace[10pt]
\multicolumn{11}{c}{\p = polynomial, \np = NP-hard, \snp = strongly NP-hard,} \\
\addlinespace
\multicolumn{11}{c}{BP2 = bin packing with 2 bins, \enspace MAX 2-SAT = maximum 2-satisfiability, \enspace MIN 2-SAT = minimum 2-satisfiability,} \\
\multicolumn{11}{c}{MIS = maximal independent set, \enspace X3C = exact cover by 3-sets}
\end{tabular}
\end{sidewaystable}
%\end{landscape}

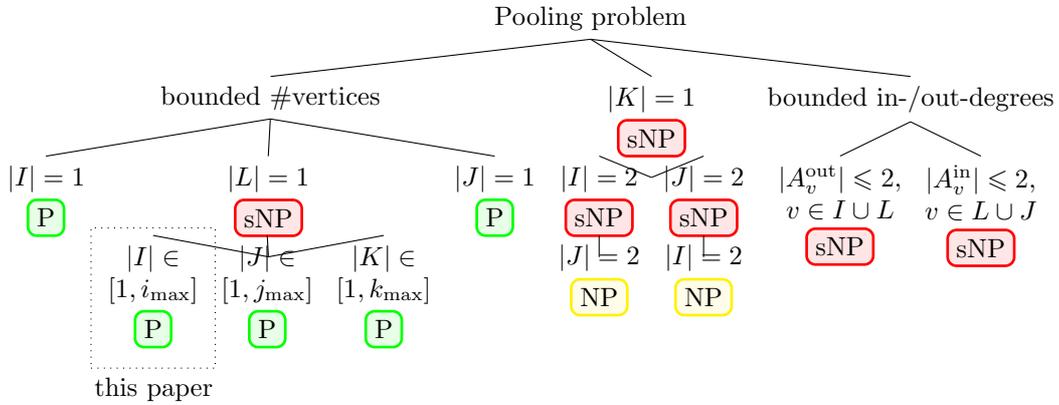
\begin{figure}[htb]
\centering
\begin{tikzpicture}
\tikzset{every tree node/.style={align=center,anchor=north}}
\tikzset{level 1/.style={level distance=40pt}}
\tikzset{level 2+/.style={level distance=60pt}}
\Tree
[.\node(r){Pooling problem};
	[.\node(n1){bounded \#vertices};
		\node(n11){$|I| = 1$ \\[2pt] \p};
		[.\node(n12){$|L| = 1$ \\[2pt] \snp};
			\node(n121){$|I| \in$ \\ $[1, i_{\max}]$ \\[2pt] \p};
			\node(n122){$|J| \in$ \\ $[1, j_{\max}]$ \\[2pt] \p};
			\node(n123){$|K| \in$ \\ $[1, k_{\max}]$ \\[2pt] \p};
		]
		\node(n13){$|J| = 1$ \\[2pt] \p};
	]
	[.\node(n2){$|K| = 1$ \\[2pt] \snp};
		[.\node(n21){$|I| = 2$ \\[2pt] \snp};
			\node(n211){$|J| = 2$ \\[2pt] \np};
		]
		[.\node(n23){$|J| = 2$ \\[2pt] \snp};
			\node(n231){$|I| = 2$ \\[2pt] \np};
		]
	]
	[.\node(n3){bounded in-\slash out-degrees};
		\node(n31){$|\Aout{v}| \leqslant 2,$ \\ $v \in I \cup L$ \\[2pt] \snp};
		\node(n32){$|\Ain{v}| \leqslant 2,$ \\ $v \in L \cup J$ \\[2pt] \snp};
	]
]
\begin{comment}
% Draw NP box
\draw[dotted] plot [smooth cycle,tension=0.25] coordinates {
	($(n211.north west)+(0,0.25)$)
	($(n211.south west)$)
	($(n231.south east)$)
	($(n231.north east)+(0,0.25)$)
};
% Draw P/sNP line left of NP box
\draw[dotted] plot [smooth,tension=0.5] coordinates {
	($(n11.north west)+(0,1)$)
	($(n211.north west)+(-2,0.5)$)
	($(n211.north west)+(0,0.25)$)
};
% Draw P/sNP line right of NP box
\draw[dotted] plot [smooth,tension=1] coordinates {
	($(n231.north east)+(0,0.25)$)
	($(n231.north east)+(4,0.25)$)
};
\end{comment}
% Draw "this paper" rectangle
\draw[dotted] ($(n121.north west)+(-0.1,0.1)$) rectangle ($(n121.south east)+(0.1,0)$) node[below] {\hspace{-46pt}this paper};
\end{tikzpicture}
\caption{Overview of known complexity results in a tree structure. For simplicity, we omit \#11 and \#14 from Table~\ref{tab:complexityResultsTable}.}
\label{fig:complexityResultsFigure}
\end{figure}

\section{The pooling problem with one pool and a bounded number of inputs}

\begin{comment}
\begin{figure}
\centering
\begin{tikzpicture}
\tikzset{n/.style={draw,circle,fill=white,outer sep=2pt,inner sep=0pt,minimum size=20pt}}
\tikzset{a/.style={above,midway}}
% Nodes
%  Input nodes
\node[n] (v1) at (-2.5,1) {$v_1$};
\node[n] (v2) at (-2.5,-.3) {$v_2$};
\node[draw=none] (vvdots) at (-2.5,-1.1) {$\vdots$};
\node[n] (vm) at (-2.5,-2) {$v_m$};
%  Pool node
\node[n] (p) at (0,0) {$\ell$};
%  Output nodes
\node[n] (w1) at (2.5,1) {$w_1$};
\node[n] (w2) at (2.5,-.3) {$w_2$};
\node[draw=none] (wvdots) at (2.5,-1.1) {$\vdots$};
\node[n] (wn) at (2.5,-2) {$w_n$};
% Arcs
%  Input-to-pool arcs
\draw[->] (v1) -- (p) node[a,xshift=5pt] {$a_1$};
\draw[->] (v2) -- (p) node[a,xshift=-10pt] {$a_2$};
\draw[->] (vm) -- (p) node[a,xshift=-10pt] {$a_m$};
%  Pool-to-output arcs
\draw[->] (p) -- (w1) node[a,xshift=-5pt] {$a_{m + 1}$};
\draw[->] (p) -- (w2) node[a,xshift=10pt] {$a_{m + 2}$};
\draw[->] (p) -- (wn) node[a,xshift=10pt] {$a_{m + n}$};
\end{tikzpicture}
\caption{Graph of the pooling problem with one pool and a bounded number of inputs}
\label{fig:graph}
\end{figure}
\end{comment}

In this section, we consider the pooling problem with
\begin{itemize}[label=\raisebox{0.25ex}{\tiny$\bullet$}]
\setlength\itemsep{-5pt}
\item $m$ inputs (let $I = \{ v_1, \ldots, v_m \}$),
\item one pool (let $L = \{ \ell \}$),
\item $n$ outputs (let $J = \{ w_1, \ldots, w_n \}$,
\item $q$ qualities (let $K = \{ 1, \ldots, q \}$),
\item the set of input-to-pool arcs $A_I = \{ a_1, \ldots, a_m \} = \{ (v_1, \ell), \ldots, (v_m, \ell) \}$, and
\item the set of pool-to-output arcs $A_J = \{ a_{m + 1}, \ldots, a_{m + n} \} = \{ (\ell, w_1), \ldots, (\ell, w_n) \}$.
\end{itemize}
%The graph of this special case of the pooling problem is shown in Figure~\ref{fig:graph}.
We write
\begin{itemize}[label=\raisebox{0.25ex}{\tiny$\bullet$}]
\setlength\itemsep{-5pt}
\item $x_i$ for the flow on input-to-pool arc $a_i$ $(i = 1, \ldots, m)$,
\item $y_j$ for the flow on pool-to-output arc $a_{m + j}$ $(j = 1, \ldots, n)$,
\item $c_i$ for the cost of flow on arc $a_i$ $(i = 1, \ldots, m + n)$,
\item $\lambda_{ik}$ for the $k$-th quality value at the tail node of input-to-pool arc $a_i$ $(i = 1, \ldots, m)$, and
\item $\mu_{jk}$ for the bound on the $k$-th quality value at the head node of arc $a_{m + j}$ $(j = 1, \ldots, n)$.
\end{itemize}
For a positive integer $N$, we use $[N]$ to denote the set $\{1,2,\ldots,N\}$. If for some $j \in[n]$, there exists a $k \in[q]$ such that $\min \{
\lambda_{ik}: \, i\in[m]\} > \mu_{jk}$, then $y_j = 0$ in every feasible solution. Hence,
without loss of generality, we assume
\begin{equation}\label{eq:feasibility_assumption}
\forall j \in [n] \enspace \forall k \in [q] \qquad \min \{ \lambda_{ik}: \, i \in[m] \} \leqslant \mu_{jk}.
\end{equation}
Note that $y_j > 0$ implies
\begin{equation}\label{eq:reachable_output}
\forall k\in[q]\qquad\sum \limits_{i = 1}^m \lambda_{ik} x_i \leqslant \mu_{jk} \sum \limits_{i = 1}^m x_i.  
\end{equation}
It has been observed, for instance in~\cite{Haugland15}, that for a fixed set $J'\subseteq[n]$ of
outputs, an optimal solution that satisfies the quality constraints for all $j\in J'$ and has $y_j=0$
for all $j\in[n]\setminus J'$, can be found by solving the following linear program which we denote
by $\LP(J')$:
\begin{align*}
\underset{\vect{x}, \vect{y}}{\min} \qquad  \sum \limits_{i = 1}^m c_i x_i &+ \sum \limits_{j \in J'} c_j y_j \\
\textnormal{s.t.} \qquad  (\vect{x}, \vect{y}) &\in \mathcal{F}, \\
 \sum \limits_{i = 1}^m x_i &= \sum \limits_{j \in J'} y_j, \\
 \sum \limits_{i = 1}^{m - 1} (\lambda_{ik} - \lambda_{mk}) x_i &\leqslant (\mu_{jk} - \lambda_{mk}) (x_1 + \cdots + x_m), \qquad j \in J', \ k \in [q].
\end{align*}
Let $\val(J')$ denote the optimal value of problem $\LP(J')$. An optimal solution for the
pooling problem can be obtained by solving $\LP(J')$ for every $J'\subseteq[n]$, and choosing one with
minimum $\val(J')$. Below we argue that if the number $m$ of inputs is fixed, then it is sufficient to
consider a polynomial number of subsets $J'$, where the polynomial is of degree $m-1$ in both $n$ and
$q$.

Introducing variables $z_i = x_i / \sum_{i' = 1}^m x_{i'}$ for $i
\in [m-1]$, condition~\eqref{eq:reachable_output} can be rewritten as
\begin{equation}\label{eq:reachable_output_normalized}
\forall k\in[q]\qquad\sum \limits_{i = 1}^{m - 1} (\lambda_{ik} - \lambda_{mk}) z_i \leqslant \mu_{jk} - \lambda_{mk}.  
\end{equation}

The vector $\vect{z}$ is an element of the simplex $\Delta^{m - 1} = \{ \vect{z} \in [0, 1]^{m - 1}:
\, z_1 + \cdots + z_{m - 1} \leqslant 1 \}$. For $\vect{z} \in \Delta^{m - 1}$, we define the \emph{reachable output set} $J(\vect{z})$ as
\begin{equation}\label{eq:def_J(z)}
J(\vect{z}) = \left \{ j \in[n]: \, \textnormal{\eqref{eq:reachable_output_normalized} is satisfied} \right \}.
\end{equation}

\medskip

\begin{lemma}\label{lem:lower_bound_val(z)}
The objective value for any flow corresponding to $\vect z\in\Delta^{m-1}$ is at least
$\val(J(\vect z))$.
\end{lemma}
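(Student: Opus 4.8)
The plan is to show that any feasible flow whose normalized input proportions equal $\vect z$ is itself a feasible point of the linear program $\LP(J(\vect z))$; since $\val(J(\vect z))$ is by definition the minimum of that program, the objective value of the flow can only be at least as large. The only genuinely nontrivial point is to verify feasibility for $\LP(J(\vect z))$ constraint by constraint, and in particular to make sure the output quality constraints hold for \emph{every} $j\in J(\vect z)$ and not merely for those outputs that actually carry flow.

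First I would fix a feasible flow $(\vect x,\vect y)$ corresponding to $\vect z$; by definition this means $\sum_{i=1}^m x_i>0$ and $z_i=x_i/\sum_{i'=1}^m x_{i'}$ for $i\in[m-1]$. I would then use the observation recorded before the lemma, namely that $y_j>0$ implies \eqref{eq:reachable_output}: subtracting $\lambda_{mk}\sum_{i=1}^m x_i$ from both sides of \eqref{eq:reachable_output} and dividing by $\sum_{i=1}^m x_i>0$ yields exactly \eqref{eq:reachable_output_normalized}, so $j\in J(\vect z)$. Consequently the support $\{j\in[n]:y_j>0\}$ is contained in $J(\vect z)$, hence $y_j=0$ for all $j\in[n]\setminus J(\vect z)$, which matches the variable restriction built into $\LP(J(\vect z))$.

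Next I would check the remaining constraints of $\LP(J(\vect z))$. Membership $(\vect x,\vect y)\in\mathcal F$ holds because the flow is feasible for the pooling problem, and flow conservation $\sum_{i=1}^m x_i=\sum_{j\in J(\vect z)}y_j$ follows from \eqref{eq:1} together with $y_j=0$ off $J(\vect z)$. The key step is the quality constraints: for each $j\in J(\vect z)$ and $k\in[q]$, the definition \eqref{eq:def_J(z)} guarantees \eqref{eq:reachable_output_normalized}, and multiplying that inequality through by $\sum_{i=1}^m x_i>0$ reproduces precisely the constraint $\sum_{i=1}^{m-1}(\lambda_{ik}-\lambda_{mk})x_i\leqslant(\mu_{jk}-\lambda_{mk})(x_1+\cdots+x_m)$ of $\LP(J(\vect z))$. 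Crucially this argument never uses $y_j>0$, so it also covers those $j\in J(\vect z)$ with $y_j=0$; this is exactly the point at which defining $J(\vect z)$ through the proportions $\vect z$, rather than through the support of $\vect y$, pays off.

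Finally, since $(\vect x,\vect y)$ satisfies all constraints of $\LP(J(\vect z))$ and its pooling objective $\sum_{i=1}^m c_ix_i+\sum_{j=1}^n c_jy_j$ coincides with the LP objective $\sum_{i=1}^m c_ix_i+\sum_{j\in J(\vect z)}c_jy_j$ (once more because $y_j=0$ off $J(\vect z)$), it is a feasible point of a minimization problem whose optimum is $\val(J(\vect z))$, so its value is at least $\val(J(\vect z))$. The main (if modest) obstacle will be stating cleanly why the quality constraints are forced on all of $J(\vect z)$ and not only on the support of $\vect y$; the rest is bookkeeping.
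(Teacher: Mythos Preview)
Your argument is correct and follows essentially the same route as the paper: both show that any feasible flow with input proportions $\vect z$ is feasible for $\LP(J(\vect z))$, hence has objective at least $\val(J(\vect z))$. The only cosmetic difference is that the paper packages the ``flows with proportions $\vect z$'' as the feasible region of an auxiliary LP (adding the constraints $x_i=z_i\sum_{i'}x_{i'}$) and then asserts in one line that this region sits inside the feasible set of $\LP(J(\vect z))$, whereas you verify each constraint by hand; your more explicit check of the quality constraints for all $j\in J(\vect z)$ is exactly the content hidden in the paper's one-line assertion.
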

\begin{proof}
For a fixed $\vect{z} \in \Delta^{m - 1}$, we can find the optimal flow by solving the linear program
\begin{align*}
\underset{\vect{x}, \vect{y}}{\min} \qquad  \sum \limits_{i = 1}^m c_i x_i &+ \sum \limits_{j \in J(\vect{z})} c_j y_j \\
\textnormal{s.t.} \qquad  (\vect{x}, \vect{y}) &\in \mathcal{F}, \\
 \sum \limits_{i = 1}^m x_i &= \sum_{j \in J(\vect{z})} y_j, \\
 x_i &= z_i (x_1 + \cdots + x_m), \qquad i\in[m].
\end{align*}
Every feasible solution for this problem is also feasible for $\LP(J(\vect z))$ and the claim follows.
\end{proof}
The inequalities~\eqref{eq:reachable_output_normalized} define a partition of $\mathbb{R}^{m-1}$ (and
therefore of $\Delta^{m-1}$) into regions of constant $J(\vect z)$. To be more precise, let $\mathcal{H}$ be the
hyperplane arrangement $\mathcal{H} = \{ H_{jk}: \, j \in[n], \, k \in[q] \}$, where
\[H_{jk} = \left \{ \vect{z} \in \reals^{m - 1}: \, \sum \limits_{i = 1}^{m - 1} (\lambda_{ik} -
  \lambda_{mk}) z_i = \mu_{jk} - \lambda_{mk} \right \}.\]
The system $\mathcal H$ induces a partition of $\reals^{m-1}$. Let $H_{jk}^0$ and $H_{jk}^1$ be
defined by
\begin{align*}
H_{jk}^0 & = \left \{ \vect{z} \in \reals^{m - 1}: \, \sum \limits_{i = 1}^{m - 1} (\lambda_{ik} - \lambda_{mk}) z_i \leqslant \mu_{jk} - \lambda_{mk} \right \}, \\
H_{jk}^1 & = \left \{ \vect{z} \in \reals^{m - 1}: \, \sum \limits_{i = 1}^{m - 1} (\lambda_{ik} -
  \lambda_{mk}) z_ i>  \mu_{jk} - \lambda_{mk} \right \}.
\end{align*}
If, for every vector $\vect{\varepsilon} = (\varepsilon_{jk})_{j \in[n], \, k \in[q]} \in \{ 0, 1 \}^{nq}$, we define the set
\[P(\vect{\varepsilon}) = \bigcap \limits_{j = 1}^n \bigcap \limits_{k = 1}^q
H_{jk}^{\varepsilon_{jk}},\] 
then the space $\reals^{m-1}$ is the disjoint union of the sets $P(\vect\varepsilon)$, and for every
$\vect z\in\Delta^{m-1}$ the set $J(\vect z)$ is determined by the vector $\vect\varepsilon$ with $\vect z\in P(\vect\varepsilon)$.

\medskip

\begin{lemma}\label{lem:J(e)=J(z)}
For $\vect\varepsilon\in\{0,1\}^{nq}$, let $J(\vect\varepsilon)=\{j\in[n]\ :\ \forall k\in[q]\ \varepsilon_{jk}=0\}$. Then, for all $\vect\varepsilon\in\{0,1\}^{nq}$ and
for all $\vect z\in P(\vect\varepsilon)\cap\Delta^{m-1}$, we have $J(\vect z)=J(\vect\varepsilon)$. 
\end{lemma}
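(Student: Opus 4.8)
The plan is to prove the lemma by unwinding the two definitions, since the arrangement $\mathcal H$ and the sets $H_{jk}^0, H_{jk}^1$ are set up precisely so that the two notions of reachable output set coincide. I would fix an arbitrary $\vect\varepsilon\in\{0,1\}^{nq}$ together with a point $\vect z\in P(\vect\varepsilon)\cap\Delta^{m-1}$, and take an arbitrary $j\in[n]$. The goal is then to establish the equivalence $j\in J(\vect z)\iff j\in J(\vect\varepsilon)$; since $j$ is arbitrary this gives the claimed set equality.

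The key observation is that for each fixed pair $(j,k)$ the sets $H_{jk}^0$ and $H_{jk}^1$ partition $\reals^{m-1}$: they are defined by the inequalities $\leqslant$ and $>$ on the \emph{same} affine expression $\sum_{i=1}^{m-1}(\lambda_{ik}-\lambda_{mk})z_i-(\mu_{jk}-\lambda_{mk})$, so they are disjoint and their union is all of $\reals^{m-1}$. From $\vect z\in P(\vect\varepsilon)=\bigcap_{j,k}H_{jk}^{\varepsilon_{jk}}$ I read off that $\vect z\in H_{jk}^{\varepsilon_{jk}}$ for every $(j,k)$, and because the two half-spaces are complementary this yields the pointwise equivalence $\vect z\in H_{jk}^0\iff\varepsilon_{jk}=0$.

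With this in hand the lemma follows by a short chain of equivalences. By the definition of $J(\vect z)$ in \eqref{eq:def_J(z)}, we have $j\in J(\vect z)$ exactly when \eqref{eq:reachable_output_normalized} holds for all $k\in[q]$, which is precisely the statement that $\vect z\in H_{jk}^0$ for all $k$. By the complementarity just established this is equivalent to $\varepsilon_{jk}=0$ for all $k$, which by the definition of $J(\vect\varepsilon)$ is exactly the condition $j\in J(\vect\varepsilon)$.

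There is no substantive obstacle here; the only point demanding care is the boundary convention. One must verify that $H_{jk}^0$ (non-strict $\leqslant$) and $H_{jk}^1$ (strict $>$) genuinely partition $\reals^{m-1}$, so that knowing $\vect z\in H_{jk}^{\varepsilon_{jk}}$ pins down whether $\vect z\in H_{jk}^0$. Had both sets been defined with non-strict inequalities, a point lying on the hyperplane $H_{jk}$ would belong to both, and the equivalence $\vect z\in H_{jk}^0\iff\varepsilon_{jk}=0$ could fail; the asymmetric choice of $\leqslant$ versus $>$ is exactly what makes this bookkeeping go through cleanly.
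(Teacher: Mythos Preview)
Your proof is correct and follows essentially the same route as the paper: fix $\vect\varepsilon$ and $\vect z\in P(\vect\varepsilon)\cap\Delta^{m-1}$, then run the chain of equivalences $j\in J(\vect z)\iff\forall k\ \vect z\in H_{jk}^0\iff\forall k\ \varepsilon_{jk}=0\iff j\in J(\vect\varepsilon)$. You spell out the complementarity of $H_{jk}^0$ and $H_{jk}^1$ more explicitly than the paper (which compresses it into the annotation ``$\vect z\in P(\vect\varepsilon)$'' over the middle $\iff$), and your remark on the boundary convention is a valid clarification, but the argument is the same.
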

\begin{proof}
Let $\vect\varepsilon\in\{0,1\}^{nq}$ and $\vect z\in
P(\vect\varepsilon)\cap\Delta^{m-1}$. Then
\[  j\in J(\vect z)\quad\stackrel{\eqref{eq:def_J(z)}}{\iff}\quad\forall k\in[q]\quad \vect z\in
  H_{jk}^0\quad\stackrel{\vect z\in P(\vect\varepsilon)}{\iff}\quad\forall k\in[q]\quad \varepsilon_{jk}=0\quad\iff\quad j\in J(\vect\varepsilon).\qedhere\]
\end{proof}
It is well known that the number of nonempty sets $P(\vect\varepsilon)$ is bounded by a polynomial
of degree $m$ in $nq$ (see for example~\cite{Buck43}). However, direct application of
  \cite{Buck43} yields the upper bound $\sum_{i = 0}^{m - 1} \tbinom{n q}{i}$, which is weaker than
  the bound in the following lemma. We derive a stronger bound than \cite{Buck43} since the $n q$
  hyperplanes are partitioned into $q$ subsets of each $n$ parallel hyperplanes.

\medskip

\begin{lemma}\label{lem:nonempty_polytopes} 
  There are at most $\displaystyle \sum \limits_{i=0}^{m-1} \binom{q}{i} n^i$ vectors
  $\vect{\varepsilon} \in \{0,\,1\}^{nq}$ such that $P(\vect{\varepsilon}) \neq \emptyset$.
\end{lemma}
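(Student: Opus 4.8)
The plan is to prove a self-contained statement about hyperplane arrangements and then specialise. Observe first that for each fixed $k \in [q]$ the $n$ hyperplanes $H_{1k}, \ldots, H_{nk}$ share the common normal vector $(\lambda_{1k} - \lambda_{mk}, \ldots, \lambda_{m-1,k} - \lambda_{mk})$ and are therefore mutually parallel. Thus $\mathcal H$ is an arrangement of $q$ \emph{parallel classes}, each consisting of $n$ hyperplanes, in $\reals^{m-1}$; and by construction the nonempty sets $P(\vect\varepsilon)$ are exactly the nonempty cells of the half-open decomposition of $\reals^{m-1}$ in which a point lying on a hyperplane is assigned to its $H^0$-side. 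Writing $\sigma(\mathcal A)$ for the number of such cells of an arrangement $\mathcal A$, the lemma is the special case $d = m-1$ of the following claim: any arrangement in $\reals^d$ consisting of at most $q$ parallel classes of at most $n$ hyperplanes each (with an arbitrary choice of positive side for each hyperplane) satisfies $\sigma \leqslant g(d,q) := \sum_{i=0}^{d}\binom{q}{i} n^i$.

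I would prove this claim by induction on $q$, uniformly in $d$, building the arrangement one parallel class at a time and, within a class, one hyperplane at a time. The base case $q = 0$ (and likewise $d = 0$) is immediate, since then $\sigma = 1 = g(d,0)$. For the inductive step, let $\mathcal A'$ be the arrangement with class $q$ removed and let $H$ be one hyperplane of class $q$ that is then added. The central bookkeeping fact is that adding $H$ increases $\sigma$ by at most the number of half-open cells of $\mathcal A'$ that $H$ meets, and that this number equals $\sigma(\mathcal A'|_H)$, where $\mathcal A'|_H$ is the arrangement induced on $H \cong \reals^{d-1}$ by intersecting it with the members of $\mathcal A'$. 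The key is that all hyperplanes of class $q$ are parallel to $H$ and so leave no trace on $H$; hence $\mathcal A'|_H$ is an arrangement of at most $q-1$ parallel classes of at most $n$ hyperplanes each in dimension $d-1$, and by the induction hypothesis $\sigma(\mathcal A'|_H) \leqslant g(d-1, q-1)$. Adding all $n$ hyperplanes of class $q$ therefore contributes at most $n\,g(d-1,q-1)$, giving
\[
\sigma(\mathcal A) \;\leqslant\; \sigma(\mathcal A') + n\,g(d-1,q-1) \;\leqslant\; g(d,q-1) + n\,g(d-1,q-1),
\]
and a short computation using Pascal's rule $\binom{q-1}{i} + \binom{q-1}{i-1} = \binom{q}{i}$ shows that the right-hand side equals $g(d,q)$, closing the induction. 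Since $g$ is monotone in $q$, allowing ``at most'' $q$ classes and fewer than $n$ hyperplanes per class causes no difficulty.

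I expect the delicate point to be this central fact, which is precisely where the improvement over the generic bound $\sum_{i=0}^{m-1}\binom{nq}{i}$ of \cite{Buck43} arises. Two things need care. First, because a point on a hyperplane is assigned to its $H^0$-side, the relevant cells are half-open rather than open, and consequently $\sigma(\mathcal A)$ can strictly exceed the number of full-dimensional regions of $\mathcal A$; the argument must therefore be phrased in terms of cells (sign patterns) throughout and cannot be replaced by a count of open regions. I would verify that a (convex) cell $P'(\vect\varepsilon')$ of $\mathcal A'$ splits into two cells of $\mathcal A$ exactly when it meets both $H$ and the open positive side of $H$, so that the number of splitting cells is at most the number meeting $H$. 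Second, I would check that the $\mathcal A'$-pattern of a point $\vect z \in H$ is determined by its $\mathcal A'|_H$-pattern within $H$ together with the (constant over $H$) sides of $\vect z$ relative to those members of $\mathcal A'$ parallel to $H$; this identifies the number of cells of $\mathcal A'$ meeting $H$ with $\sigma(\mathcal A'|_H)$ and makes precise the claim that parallel hyperplanes ``disappear'' upon restriction. As a sanity check on tightness and on the necessity of counting cells rather than regions, the configuration of three concurrent lines with pairwise angles $120^\circ$ (here $d=2$, $q=3$, $n=1$) realises $\sigma = 7 = g(2,3)$ while possessing only six open regions.
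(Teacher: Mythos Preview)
Your proposal is correct and follows essentially the same approach as the paper: both arguments proceed by induction, adding the hyperplanes of the last parallel class one at a time, bounding each increment by the cell count of the arrangement restricted to the new hyperplane (which has one fewer parallel class in one lower dimension), and then closing the recursion via Pascal's rule. Your version is slightly tidier in abstracting the statement, using the base case $q=0$, and being explicit about counting half-open sign-pattern cells rather than open regions, but the core inductive strategy is identical to the paper's.
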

\begin{proof}
We denote the claim of the lemma, parameterized by the input cardinality~$m$ and the quality
  cardinality~$q$, by $C(m, q)$, and we prove this claim by induction on $m$ and $q$. Base case
  and inductive step %of this two-dimensional induction proof of $C(m, q)$ 
are as follows:
\begin{enumerate}
%\NumTabs{4} % Declares a list of 4 equally-spaced tabs, starting at the left margin and spanning \linewidth.
\item \emph{Base case:} $\forall q,m \in\{1,2,\ldots\}: \enspace C(1, q),\ C(2,q)\textnormal{
    and } C(m,1)$
%\item \emph{Induction hypothesis:} \tab{$\forall i \in [m] \setminus \{ 1 \} \enspace \forall k \in [q] \setminus \{ 1 \}: \enspace C(i - 1, k - 1) \wedge C(i, k - 1)$}
\item \emph{Inductive step:} $\forall q\in \{2,3,\ldots\},\ \forall m\in\{3,4,\ldots\}: \enspace C(m - 1, q - 1) \wedge C(m, q - 1) \implies C(m, q)$
\end{enumerate}
For $m = 1$, note that $\reals^0=\{0\}$ contains only a single point, and since the sets
$P(\vect\varepsilon)$ are disjoint there can be at most $1=\tbinom{q}{0}n^0$ nonempty sets
$P(\vect\varepsilon)$. In fact, using assumption~\eqref{eq:feasibility_assumption}, we have $P(\vect\varepsilon)\neq\emptyset\iff\vect\varepsilon=\vect 0$. For $m=2$, the $nq$ inequalities partition $\reals^1$ into at most
$1+nq=\binom{q}{0}n^0+\binom{q}{1}n^1$ intervals. For $q = 1$ and $m\geqslant 3$, the $n$ parallel
hyperplanes $H_{11}, \ldots, H_{n1}$ partition $\reals^{m - 1}$ into at most $1 + n = \tbinom{1}{0}
n^0 + \tbinom{1}{1} n^1$ parts.
Now let $q\geqslant 2$, $m\geqslant 3$, and assume that $C(m-1,q-1)$ and $C(m,q-1)$ are true. 
From $C(m,q-1)$ it follows that the system $\{ H_{jk}: \enspace j \in [n],
\enspace k \in [q - 1] \}$ cuts $\reals^{m - 1}$ into at most
\[\sum \limits_{i = 0}^{m - 1} \binom{q - 1}{i} n^i\]
parts. For every $j\in[n]$, the hyperplane $H_{jq}$ is isomorphic to $\reals^{m-2}$, and for every
$j'\in[n]$, $k\in[q-1]$, the intersection $H_{j'k}\cap H_{jq}$ is either empty or an
$(m-3)$-dimensional affine subspace of $H_{jk}$. Since the map $H_{j'k}\mapsto H_{j'k}\cap H_{jq}$ preserves
parallelism, $C(m-1,q-1)$ implies that the hyperplane $H_{jq}$
is cut by the system $\{ H_{j'k} \cap H_{jq}: \enspace j' \in [n], \enspace k \in [q - 1] \}$ into
at most
\[\sum \limits_{i = 0}^{m - 2} \binom{q - 1}{i} n^i\]
parts. If we start with the partition of $\reals^{m-1}$ given by the system $\{ H_{jk}: \enspace j \in [n],
\enspace k \in [q - 1] \}$ and add the hyperplanes $H_{1q}$, $H_{2q}$,\ldots, $H_{nq}$ one by one,
then every hyperplane adds at most $\sum_{i = 0}^{m - 2} \binom{q - 1}{i} n^i$ parts to the
partition, and the number of parts into which $\reals^{m - 1}$ is cut by $\mathcal H$ is
at most
\begin{multline*}
\sum \limits_{i = 0}^{m - 1} \binom{q - 1}{i} n^i+  n \sum \limits_{i = 0}^{m - 2} \binom{q - 1}{i}
n^i = \sum \limits_{i = 0}^{m - 1} \binom{q - 1}{i} n^i + \sum \limits_{i = 1}^{m - 1} \binom{q - 1}{i - 1} n^i \\
  = \binom{q - 1}{0} n^0 + \sum_{i = 1}^{m - 1} \left ( \binom{q - 1}{i} + \binom{q - 1}{i - 1}
  \right ) n^i = \sum_{i = 0}^{m - 1} \binom{q}{i} n^i. \tag*{\qedhere}
\end{multline*}
\end{proof}
\begin{remark}
Note that the proof of Lemma~\ref{lem:nonempty_polytopes} also provides a recursive method to determine the
vectors $\vect\varepsilon$ with $P(\vect\varepsilon)\neq\emptyset$ in polynomial time.  
\end{remark}
\begin{remark}
  The upper bound given in Lemma~\ref{lem:nonempty_polytopes} is best possible, i.e., for all $m$,
  $q$ and $n$, there exist instances in which the number of vectors $\vect\varepsilon$ with
  $P(\vect\varepsilon) \neq \emptyset$ equals $\sum_{i=0}^{m-1} \binom{q}{i} n^i$. In fact, this bound is obtained by almost all systems
  $\mathcal H$. To make this statement more precise, we say that a system $\mathcal H$ of $nq$ hyperplanes $H_{jk}$
  in $\reals^{m-1}$, consisting of $q$ sets of $n$ parallel hyperplanes, is in \emph{general position} if
  the intersection of every set of $m$ of these hyperplanes is empty and
  \begin{multline*}
\forall t\in[m-1]\quad \forall
(j_1,\ldots,j_t)\in[n]^t\quad \forall (k_1,\ldots,k_t)\in[q]^t\text{ with }k_1<k_2<\cdots< k_t \\
H_{j_1k_1}\cap H_{j_2k_2}\cap\cdots\cap H_{j_tk_t}\text{ is an $(m-1-t)$-dimensional affine subspace
  of $\reals^{m-1}$.}    
  \end{multline*}
  The bound in Lemma~\ref{lem:nonempty_polytopes} is obtained whenever the system
  $\mathcal H$ is in general position, and this can be seen by checking that in this case all
  estimates in the induction proof are tight. For $m=1$, we have that $P(\vect
  0)=\{0\}\neq\emptyset$. For $m=2$, the system $\mathcal H$ is a list of $nq$ points, and $\mathcal
  H$ is in general position if these points are distinct, in which case it partitions $\reals^{m-1}$
  into $1+nq$ parts as required. For $q=1$, the $n$ parallel hyperplanes $H_{11}, \ldots, H_{n1}$ in
  general position partition $\reals^{m - 1}$ into exactly $1 + n$ parts. For the inductive step,
  note that the system of intersections $\{H_{j'k}\cap H_{jq}\ :\ j'\in[n],\ k\in[q-1]\}$ forms a
  system of hyperplanes in general position in $H_{jq}$, and therefore the inequalities in the
  inductive step are satisfied with equality.
\end{remark}

\medskip

\begin{theorem}\label{thm:polynomial_time}
  For every positive integer $m$, the pooling problem with one pool and $m$ inputs can be solved in
  polynomial time. More precisely, it can be reduced to solving at most
\[\sum \limits_{i = 0}^{m-1} \binom{q}{i} n^i\]
linear programs with $m+n$ variables and $m+n(q+1)+2$ constraints, where $q$ is the number of qualities and $n$ is the number of outputs.
\end{theorem}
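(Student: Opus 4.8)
The plan is to show that the optimal value $V^*$ of the pooling problem equals $\min\{\val(J(\vect\varepsilon))\ :\ \vect\varepsilon\in E\}$, where $E=\{\vect\varepsilon\in\{0,1\}^{nq}\ :\ P(\vect\varepsilon)\cap\Delta^{m-1}\neq\emptyset\}$, and then to bound $|E|$ and the size of each program $\LP(J(\vect\varepsilon))$. As already noted above, $V^*=\min_{J'\subseteq[n]}\val(J')$: the optimum of $\LP(J')$ is attained by a flow that extends (after recovering $\vect p$ from $p_{\ell k}=\sum_i\lambda_{ik}x_i/\sum_i x_i$) to a feasible pooling solution of the same cost, so $V^*\leqslant\val(J')$ for every $J'$; conversely, any feasible pooling flow restricted to its support $J'=\{j:y_j>0\}$ is feasible for $\LP(J')$ with the same cost, so $\min_{J'}\val(J')\leqslant V^*$. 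The whole point of the theorem is therefore that the minimisation over all $2^n$ subsets $J'$ may be replaced by a minimisation over only the polynomially many reachable output sets $J(\vect\varepsilon)$, $\vect\varepsilon\in E$.

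For this reduction I would prove two inequalities. Since every $J(\vect\varepsilon)$ is a subset of $[n]$, shrinking the index set can only raise the minimum, giving $\min_{\vect\varepsilon\in E}\val(J(\vect\varepsilon))\geqslant V^*$. For the reverse, take an optimal flow $(\vect x,\vect y)$ of value $V^*$. If $\sum_i x_i>0$, set $z_i=x_i/\sum_{i'}x_{i'}$ for $i\in[m-1]$, so that $\vect z\in\Delta^{m-1}$; Lemma~\ref{lem:lower_bound_val(z)} yields $V^*\geqslant\val(J(\vect z))$, and if $\vect\varepsilon$ is the unique vector with $\vect z\in P(\vect\varepsilon)$ then $\vect\varepsilon\in E$ and, by Lemma~\ref{lem:J(e)=J(z)}, $J(\vect z)=J(\vect\varepsilon)$, whence $V^*\geqslant\val(J(\vect\varepsilon))\geqslant\min_{\vect\varepsilon'\in E}\val(J(\vect\varepsilon'))$. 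The remaining case $\sum_i x_i=0$ is degenerate: flow conservation then forces the all-zero flow and $V^*=0$, while the all-zero flow is feasible for every $\LP(J')$ (every quality inequality reads $0\leqslant 0$), so $\val(J')\leqslant 0$; combined with $\val(J')\geqslant V^*=0$ this gives $\val(J(\vect\varepsilon))=0=V^*$ for all $\vect\varepsilon\in E$, and $E\neq\emptyset$ because the cells $P(\vect\varepsilon)$ partition the nonempty simplex. In either case $\min_{\vect\varepsilon\in E}\val(J(\vect\varepsilon))\leqslant V^*$, establishing the equality.

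It then remains to count. We have $|E|\leqslant|\{\vect\varepsilon:P(\vect\varepsilon)\neq\emptyset\}|$, which Lemma~\ref{lem:nonempty_polytopes} bounds by $\sum_{i=0}^{m-1}\binom{q}{i}n^i$, and the remark following that lemma shows the relevant $\vect\varepsilon$ can be enumerated in polynomial time. For the size of each $\LP(J(\vect\varepsilon))$ I would specialise $\mathcal F$ to the single-pool network: combining each input's vertex- and arc-capacity bounds into $x_i\leqslant\min\{C_{v_i},u_{a_i}\}$ and each output's into $y_j\leqslant\min\{C_{w_j},u_{a_{m+j}}\}$ leaves $m$ input bounds, at most $n$ output bounds, the single flow-conservation equation, the single pool-capacity inequality $\sum_i x_i\leqslant C_\ell$, and at most $nq$ quality inequalities, i.e. at most $m+n(q+1)+2$ constraints in at most $m+n$ variables. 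Since for fixed $m$ this is a polynomial number of linear programs each of polynomial size, and linear programs are solvable in polynomial time, the pooling problem with one pool and $m$ inputs is solvable in polynomial time.

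The conceptual heart of the argument — and the step I expect to be the main obstacle — is the lower-bound reduction: recognising, via the arrangement $\mathcal H$, that although there are $2^n$ possible output supports, the assignment $\vect z\mapsto J(\vect z)$ is constant on each cell $P(\vect\varepsilon)$ and hence attains only $|E|$ distinct values on $\Delta^{m-1}$. Once this is in place, Lemma~\ref{lem:nonempty_polytopes} supplies the polynomial count and the size estimates are routine bookkeeping; the only point demanding care is the degenerate all-zero flow, which corresponds to no point of $\Delta^{m-1}$ and must be treated separately as above.
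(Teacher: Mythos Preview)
Your proof is correct and follows essentially the same approach as the paper: both establish that $\min\{\val(J(\vect\varepsilon)):\vect\varepsilon\in E\}$ equals the optimum by combining Lemma~\ref{lem:lower_bound_val(z)} and Lemma~\ref{lem:J(e)=J(z)} for the lower bound and feasibility of $\LP(J(\vect\varepsilon))$ for the upper bound, then invoke Lemma~\ref{lem:nonempty_polytopes} for the count. Your version is in fact more thorough, as you explicitly handle the degenerate zero-flow case and verify the constraint count $m+n(q+1)+2$, both of which the paper leaves implicit.
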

\begin{proof}
  We claim that the pooling problem can be solved by choosing a minimum cost solution obtained from
  solving the problem $\LP(J(\vect\varepsilon))$ for every $\vect\varepsilon$ with
  $P(\varepsilon)\cap\Delta^{m-1}\neq\emptyset$, and by Lemma~\ref{lem:nonempty_polytopes} the
  number of these linear programs is bounded as claimed. Clearly,
  $B=\min\{\val(J(\vect\varepsilon))\ :\ P(\vect\varepsilon)\cap\Delta^{m-1}\neq\emptyset\}$ is an
  upper bound because a solution for $\LP(J(\vect\varepsilon))$ is always feasible for the pooling
  problem. By Lemma~\ref{lem:J(e)=J(z)}, for every $\vect z\in\Delta^{m-1}$ there exists some
  $\vect\varepsilon$ with $J(\vect z)=J(\vect\varepsilon)$, and using
  Lemma~\ref{lem:lower_bound_val(z)} it follows that $B$ is also a lower bound.
\end{proof}

\annotate{We note that this result was obtained, independently, by Haugland and Hendrix \cite{Haugland16}.}

\section{Remaining open problems}

To further characterize the complexity of the pooling problem, the following open problems could be addressed in the future \cite{Haugland14,Haugland15}:
\begin{enumerate}
\setlength\itemsep{-5pt}
\item For all the cases that can be solved in polynomial time by reduction to polynomially many
  linear programs of polynomial size, does there exist a \emph{strongly} polynomial algorithm, i.e.,
  an algorithm that is polynomial in the number of vertices and qualities?
\item Is the pooling problem with one quality and in-degrees at most two polynomially solvable?
\item Is the pooling problem with one quality and out-degrees at most two polynomially solvable?
\item Do polynomial algorithms exist for the pooling problem with two pools and some bounds on the
  number of inputs, outputs, and qualities?
\end{enumerate}

\vfill

\paragraph{Acknowledgements} This research was supported by the ARC Linkage Grant no. LP110200524, Hunter Valley Coal Chain Coordinator (\href{http://www.hvccc.com.au}{hvccc.com.au}) and Triple Point Technology (\href{http://www.tpt.com}{tpt.com}). We would like to thank the two anonymous referees for their helpful comments which improved the quality of the paper.

%\bibliography{References}
%\bibliographystyle{abbrv}

\end{document}